\documentclass[12pt]{amsart}
\usepackage[utf8]{inputenc}
\usepackage{amssymb, amsmath}
\usepackage{fourier}
\usepackage{xcolor}
\usepackage{enumitem}
\usepackage{fancyvrb}

\usepackage{geometry}
\geometry{
 a4paper,
 %total={210mm,297mm},
 left=30mm,
 right=30mm,
 top=30mm,
 bottom=30mm,
 }

\theoremstyle{plain}

\newtheorem{theorem}{Theorem}

\newtheorem{definition}[theorem]{Definition}

\newtheorem{proposition}[theorem]{Proposition}

\theoremstyle{remark}

\newtheorem{example}[theorem]{Example}

\newcommand{\bit}{\begin{itemize}}
\newcommand{\eit}{\end{itemize}}
\newcommand{\ben}{\begin{enumerate}}
\newcommand{\een}{\end{enumerate}}
\newcommand{\be}{\begin{equation}}
\newcommand{\ee}{\end{equation}}
\newcommand{\ba}{\begin{array}}
\newcommand{\ea}{\end{array}}

\newcommand{\bec}{\begin{equation*}}
\newcommand{\eec}{\end{equation*}}

\newcommand{\dd}{\mathrm{d}}
\newcommand{\dt}{\mathrm{d}t}
\newcommand{\ds}{\mathrm{d}s}

\DeclareMathOperator*{\argmin}{arg\,min}

\newcommand{\abs}[1]{\left|#1\right|}
\newcommand{\norm}[1]{\left|\left|#1\right|\right|}

\newcommand\cA{\mathcal A}
\newcommand\cB{\mathcal B}
\newcommand\cH{\mathcal H}

\newcommand\R{\mathbb R}

\newcommand\bh{\cB(\cH)}
\newcommand{\fel}{\frac{1}{2}}
\newcommand{\D}{\mathbf{D}}

\newcommand\tr{\operatorname{Tr}}
\newcommand{\ler}[1]{\left( #1 \right)}
\newcommand\bA{\mathbf A}
\newcommand\bw{\mathbf w}

\newcommand{\lers}[1]{\left\{ #1 \right\}}

\begin{document}

\title[A divergence center interpretation of general Kubo-Ando means]{A divergence center interpretation of general symmetric Kubo-Ando means, and related weighted multivariate operator means}

\author{J\'ozsef Pitrik}
\address[J\'ozsef Pitrik]{MTA-BME Lend\"ulet (Momentum) Quantum Information Theory Research Group, and Department of Analysis, Institute of Mathematics\\
Budapest University of Technology and Economics\\
H-1521 Budapest, Hungary}
\email{pitrik@math.bme.hu}
\urladdr{http://www.math.bme.hu/\~{}pitrik}

\author{D\'aniel Virosztek}
\address[D\'aniel Virosztek]{Institute of Science and Technology Austria\\
Am Campus 1, 3400 Klosterneuburg, Austria}
\email{daniel.virosztek@ist.ac.at}
\urladdr{http://pub.ist.ac.at/\~{}dviroszt}

\thanks{J. Pitrik was supported by the Hungarian Academy of Sciences Lend\"ulet-Momentum Grant for Quantum Information Theory, No. 96 141, and by the Hungarian National Research, Development and Innovation Office (NKFIH) via grants no. K119442, no. K124152, and no. KH129601. D. Virosztek was supported by the ISTFELLOW program of the Institute of Science and Technology Austria (project code IC1027FELL01), by the European Union’s Horizon 2020 research and innovation program under the Marie Sklodowska-Curie Grant Agreement No. 846294, and partially supported by the Hungarian National Research, Development and Innovation Office (NKFIH) via grants no. K124152, and no. KH129601.
}

%\dedicatory{}

\keywords{Kubo-Ando mean, weighted multivariate mean, barycenter}
\subjclass[2010]{Primary: 47A64. Secondary: 15A24.}

\begin{abstract}
It is well known that special Kubo-Ando operator means admit divergence center interpretations, moreover, they are also mean squared error estimators for certain metrics on positive definite operators. In this paper we give a divergence center interpretation for every symmetric Kubo-Ando mean. This characterization of the symmetric means naturally leads to a definition of weighted and multivariate versions of a large class of symmetric Kubo-Ando means. We study elementary properties of these weighted multivariate means, and note in particular that in the special case of the geometric mean we recover the weighted $\mathcal{A} \# \mathcal{H}$-mean introduced by Kim, Lawson, and Lim \cite{kim-lawson-lim}.
\end{abstract}
\maketitle

%=================================================================================================
\section{Introduction} \label{sec:intro}
\subsection{Motivation, goals} \label{susec:motiv-goals}
It is well known that special Kubo-Ando operator means, namely, the arithmetic, the geometric, and the harmonic mean admit divergence center interpretations, moreover, as the divergence may be a squared distance, they are also mean squared error estimators.
The arithmetic mean
$A \nabla B=(A+B)/2$ is clearly the mean squared error estimator for the Euclidean metric on positive operators:
$$
A \nabla B= \argmin_{X>0} \fel \ler{\tr (A-X)^2+\tr (B-X)^2}.
$$
The geometric mean $A\#B=A^{1/2}(A^{-1/2}BA^{-1/2})^{1/2}A^{1/2}$ is the mean squared error estimator for both the Riemannian trace metric $d_R(X,Y)=\norm{\log{\ler{X^{-\fel}Y X^{-\fel}}}}_2,$ and the $S$-divergence $d_S(X,Y)=\sqrt{\tr \log \ler{\frac{X+Y}{2}}-\fel \tr \log X -\fel \tr \log Y}$ that is,
$$
A \# B= \argmin_{X>0} \fel \ler{d_R^2(A,X)+d_R^2(B,X)}= \argmin_{X>0} \fel \ler{d_S^2(A,X)+d_S^2(B,X)},
$$
see \cite{bhat-holb} and \cite{sra-pams} for the details, respectively.
The harmonic mean $A ! B= 2 \ler{A^{-1}+B^{-1}}^{-1}$ is the mean squared error estimator for a Riemannian metric on positive definite operators described in the Appendix in details.
Furthermore, all the above mentioned means have a standard weighted version. The curve consisting of weighted arithmetic means
$$
A \nabla_\alpha B=(1-\alpha) A +\alpha B \qquad \ler{\alpha \in [0,1]}
$$
is the geodesic in the Euclidean metric, the weighted geometric means
$$
A \#_\alpha B = A^{1/2}(A^{-1/2}BA^{-1/2})^{\alpha}A^{1/2} \qquad \ler{\alpha \in [0,1]}
$$
form the geodesic between $A$ and $B$ in the Riemannian trace metric, and the flow of weighted harmonic means
$$
A !_\alpha B= \ler{(1-\alpha)A^{-1}+\alpha B^{-1}}^{-1}
$$
is the geodesic with respect to the Riemannian metric given by
$$
\ds=\|A^{-1}\dd A A^{-1}\|_2
$$
see the Appendix for the details.

\par
In this paper we give a divergence center interpretation for every symmetric Kubo-Ando mean. This characterization of the symmetric means immediately leads to a natural definition of weighted and multivariate versions of a large class of symmetric Kubo-Ando means. We study elementary properties of these weighted multivariate means, and note in particular that in the special case of the geometric mean we recover the weighted $\mathcal{A} \# \mathcal{H}$-mean introduced by Kim, Lawson, and Lim \cite{kim-lawson-lim}.

\subsection{Basic notions, notation} \label{susec:b-not-not}
Throughout this paper, $\cH$ stands for a finite dimensional complex Hilbert space, and $\bh,$ $\bh^{sa},$ and $\cB(\cH)^{++}$ denote the sets of all linear, self-adjoint, and positive definite operators on $\cH,$ respectively. The symbol $I$ stands for the identity of $\bh,$ and we consider the L\"owner order induced by positivity on $\bh^{sa},$ that is, by $A \leq B$ we mean that $B-A$ is positive semidefinite, and $A<B$ means that $B-A$ is positive definite. The spectrum of $X \in \bh$ is denoted by $\mathrm{spec}(X).$ The symbols $\D$ and $\D^2$ denote the first and second Fr\'echet derivatives, respectively.
%=================================================================================================
\section{Symmetric Kubo-Ando means as divergence centers} \label{sec:ka-msee}

Let $\sigma: \, \bh^{++} \times \bh^{++} \rightarrow \bh^{++}$ be a symmetric Kubo-Ando operator mean, and let $f_\sigma: \, (0, \infty) \rightarrow (0,\infty)$ be the operator monotone function representing $\sigma$ in the sense that
\be \label{eq:f-sig-rep}
A \sigma B= A^{\fel}f_\sigma \ler{A^{-\fel} B A^{-\fel}} A^{\fel},
\ee
see \cite{kubo-ando} for an overview of the theory of operator means.
Clearly, $f_\sigma(1)=1,$ and the symmetry of $\sigma$ implies that $f_\sigma(x)=x f_\sigma\ler{\frac{1}{x}}$ for $x>0,$ and hence $f_\sigma'(1)=1/2$. 
We define
$$
g_\sigma: \, (0, \infty) \supseteq \mathrm{ran}\ler{f_\sigma} \rightarrow [0, \infty)
$$
by
\be\label{eq:g}
g_\sigma(x):=\int_1^x\left(1-\frac{1}{f_\sigma^{-1}(t)}\right)\dd t.
\ee
Obviously, $g_\sigma(1)=0$, $g_\sigma'(x)=1-\frac{1}{f_\sigma^{-1}(x)},$ and $g_\sigma'(1)=0$ as $f_\sigma(1)=1$. Since $f_\sigma$ is strictly monotone increasing, so is $g_\sigma'$, and hence $g_\sigma$ is strictly convex on its domain. 
Now we define the quantity
\be \label{eq:fi-sig}
\phi_\sigma (A,B):=\tr g_\sigma\left( A^{-1/2}BA^{-1/2}\right),
\ee
for positive definite operators $A,B\in\cB(\cH)^{++}$ such that the spectrum of $A^{-1/2}BA^{-1/2}$ is contained in $\mathrm{ran}\ler{f_\sigma}.$ We define $\phi_\sigma(A,B):= +\infty$ if $\mathrm{spec} \ler{A^{-1/2}BA^{-1/2}} \nsubseteq \mathrm{ran}\ler{f_\sigma}.$
It will be important in the sequel that by \cite[2.10. Thm.]{carlen} the strict convexity of $g_\sigma$ implies that $X \mapsto \phi_\sigma(A,X)$ is strictly convex (whenever finite) for every $\sigma$ and $A.$
Now we check that $\phi_\sigma$ defined in \eqref{eq:fi-sig} is a genuine divergence in the sense of Amari \cite[Sec. 1.2 \& 1.3]{amari-book}.

\begin{proposition} \label{prop:div-valid}
For any symmetric Kubo-Ando mean $\sigma,$ the map
\be \label{eq:phi-mu-map}
\phi_\sigma: \, \bh^{++} \times \bh^{++} \rightarrow [0,+\infty]; \quad (A,B) \mapsto \phi_\sigma(A,B)
\ee
satisfies the followings.
\ben[label=(\roman*)]
\item \label{pr:1definit}
$\phi_\sigma(A,B)\geq 0$ and $\phi_\sigma(A,B)=0$ if and only if $A=B.$
\item \label{pr:2vanish}
The first derivative of $\phi_\sigma$ in the second variable vanishes at the diagonal, that is,
$\D\ler{\phi_\sigma(A,\cdot)}[A]=0 \in \mathrm{Lin}\ler{\bh^{sa},\R}$ for all $A\in \bh^{++}.$
\item \label{pr:3posdef}
The second derivative of $\Phi_\sigma$ in the second variable is positive at the diagonal, that is,
$\D^2 \ler{\phi_\sigma(A,\cdot)}[A](Y,Y) \geq 0$ for all $Y \in \bh^{sa}.$
\een
\end{proposition}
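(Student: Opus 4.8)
The plan is to handle (i) by scalar functional calculus, and (ii)--(iii) by the standard trace-derivative identity together with the chain rule, exploiting at each stage the normalizations $g_\sigma(1)=0$ and $g_\sigma'(1)=0$ recorded above. For part \ref{pr:1definit}, I would first note that since $g_\sigma$ is strictly convex on its domain with $g_\sigma(1)=0$ and $g_\sigma'(1)=0$, the point $x=1$ is the unique global minimizer of $g_\sigma$, so $g_\sigma(x)\geq 0$ with equality exactly at $x=1$. Writing $M:=A^{-\fel}BA^{-\fel}$, which is positive definite, the spectral theorem gives $\tr g_\sigma(M)=\sum_i g_\sigma(\l_i)$, where the $\l_i$ are the eigenvalues of $M$; every summand is nonnegative, so $\phi_\sigma(A,B)\geq 0$, and equality forces each $\l_i=1$, i.e. $M=I$, i.e. $B=A$. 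When $\mathrm{spec}(M)\nsubseteq \mathrm{ran}(f_\sigma)$ the value is $+\infty$ and necessarily $A\neq B$, so the assertion holds in that case too.

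For part \ref{pr:2vanish}, I would introduce the linear congruence map $\Gamma_A(X):=A^{-\fel}XA^{-\fel}$, so that $\phi_\sigma(A,X)=\tr\,g_\sigma\ler{\Gamma_A(X)}$. Combining the trace-derivative identity $\D\ler{\tr h}[M](H)=\tr\ler{h'(M)H}$ with the chain rule (noting that $\Gamma_A$, being linear, equals its own derivative) yields
$$
\D\ler{\phi_\sigma(A,\cdot)}[X](Y)=\tr\ler{g_\sigma'\ler{\Gamma_A(X)}\,\Gamma_A(Y)}.
$$
Evaluating at $X=A$ gives $\Gamma_A(A)=I$, and since $g_\sigma'(1)=0$ we have $g_\sigma'(I)=0$, so the whole expression vanishes for every $Y\in\bh^{sa}$.

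For part \ref{pr:3posdef}, I would differentiate the first-derivative expression once more in $X$ along $Y$. The only delicate term is the Fr\'echet derivative of the operator function $g_\sigma'$ precomposed with $\Gamma_A$; by the chain rule this equals $\D g_\sigma'\lerm{\Gamma_A(X)}\ler{\Gamma_A(Y)}$, so that
$$
\D^2\ler{\phi_\sigma(A,\cdot)}[X](Y,Y)=\tr\ler{\D g_\sigma'\lerm{\Gamma_A(X)}\ler{\Gamma_A(Y)}\cdot\Gamma_A(Y)}.
$$
The key simplification is that at $X=A$ the base point $\Gamma_A(A)=I$ is a scalar, so all divided differences in the Daleckii--Krein formula for $\D g_\sigma'$ collapse to the single value $g_\sigma''(1)$, and the Fr\'echet derivative reduces to multiplication by the scalar $g_\sigma''(1)$. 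Hence
$$
\D^2\ler{\phi_\sigma(A,\cdot)}[A](Y,Y)=g_\sigma''(1)\,\tr\ler{\ler{\Gamma_A(Y)}^2}.
$$
Finally I would compute $g_\sigma''(1)$: from $g_\sigma'(x)=1-1/f_\sigma^{-1}(x)$ and $f_\sigma^{-1}(1)=1$ one obtains $g_\sigma''(1)=(f_\sigma^{-1})'(1)=1/f_\sigma'(1)=2$, using $f_\sigma'(1)=\fel$. Since $\Gamma_A(Y)$ is self-adjoint, $\tr\ler{\ler{\Gamma_A(Y)}^2}\geq 0$, and the claim follows (in fact with strict positivity whenever $Y\neq 0$).

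The main obstacle is the single Fr\'echet-differentiation step in \ref{pr:3posdef}: one must justify differentiating the operator function $g_\sigma'$ and, crucially, recognize that evaluation at the identity trivializes the Daleckii--Krein perturbation formula, reducing it to scalar multiplication by $g_\sigma''(1)$. Once this is in place, the remaining manipulations are elementary and the positivity of the second derivative is immediate.
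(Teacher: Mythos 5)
Your proposal is correct and follows essentially the same route as the paper's proof: part \ref{pr:1definit} via the strict convexity of $g_\sigma$ together with $g_\sigma(1)=g_\sigma'(1)=0$ and spectral calculus, part \ref{pr:2vanish} via the trace--chain-rule identity evaluated at $\Gamma_A(A)=I$, and part \ref{pr:3posdef} by reducing everything to the scalar value $g_\sigma''(1)=\ler{f_\sigma^{-1}}'(1)=2>0$. The only cosmetic difference is in \ref{pr:3posdef}, where the paper differentiates $t\mapsto\tr g_\sigma\ler{I+tA^{-\fel}YA^{-\fel}}$ twice directly while you apply the Daleckii--Krein formula to $g_\sigma'$ at the identity; both collapse to the same scalar computation, and your explicit treatment of the $+\infty$ case in \ref{pr:1definit} is a small bonus of rigor.
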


\begin{proof}
Property \ref{pr:1definit} follows from that $f_\sigma$ is strictly monotone increasing for any symmetric Kubo-Ando mean $\sigma,$ and hence so is $g_\sigma'(x)=1-\frac{1}{f_\sigma^{-1}(x)}.$ Moreover, $g_\sigma'(1)=g_\sigma(1)=0,$ and therefore $g_\sigma(x)=0$ if and only if $x=1.$ Consequently, $\phi_\sigma(A,B)=\tr g_\sigma \ler{A^{-\fel}BA^{-\fel}}=0$ if and only if $A^{-\fel}BA^{-\fel}=I.$
\par
Property \ref{pr:2vanish} can be seen by calculating that the derivative of $\phi_\sigma \ler{A, \cdot}$ at the diagonal is
$$
\D\ler{\phi_\sigma(A,\cdot)}[A](Y)=\tr g_\sigma'\ler{A^{-\fel} A A^{-\fel}} A^{-\fel} Y A^{-\fel}=0
$$
as $g_\sigma'(1)=0.$
\par
To justify Property \ref{pr:3posdef} note that the second derivative at the diagonal can be calculated as
$$
\D^2 \ler{\phi_\sigma(A,\cdot)}[A](Y,Y)
=\frac{\dd^2}{\dt^2}\phi_\sigma\ler{A, A+tY}_{|t=0}
$$
$$
=\frac{\dd^2}{\dt^2}\tr g_\sigma \ler{I+t A^{-\fel} Y A^{-\fel}}_{|t=0}
=g_\sigma''(1) \tr \ler{A^{-\fel}YA^{-\fel}}^2 \geq 0
$$
because $g_\sigma''(1)=\ler{f_\sigma^{-1}(1)}^{-2}\ler{f_\sigma^{-1}}'(1)=2$ is positive.
\par
The standard computation rules for derivatives of functions defined on operators which were used throughout this proof are collected e.g. in \cite[Thm. 3.33.]{HPbook}.
\end{proof}

In what follows, we consider the symmetric loss function $X \mapsto \frac{1}{2}\left(\phi_\sigma (A,X)+\phi_\sigma(B,X)\right)$ corresponding to the pair $A,B \in \bh^{++}$ and to the divergence $\phi_\sigma.$
Note that the domain
$$
\mathrm{dom}_{A,B, \sigma}
=\lers{X \in \bh^{++} \middle|  \fel \left(\phi_\sigma (A,X)+\phi_\sigma(B,X)\right) < \infty}
$$
is a nonempty open convex subset of $\bh^{++}.$ Indeed, $A \sigma B = B \sigma A \in \mathrm{dom}_{A,B, \sigma}$ always holds, because 
$$
\mathrm{spec}\ler{A^{-\fel} \ler{A \sigma B} A^{-\fel}}
=\mathrm{spec}\ler{f_\sigma \ler{A^{-\fel}B A^{-\fel}}} \subset \mathrm{ran}\ler{f_\sigma}
$$
and
$$
\mathrm{spec}\ler{B^{-\fel} \ler{B \sigma A} B^{-\fel}}
=\mathrm{spec}\ler{f_\sigma \ler{B^{-\fel}A B^{-\fel}}} \subset \mathrm{ran}\ler{f_\sigma}.
$$
The convexity of $\mathrm{dom}_{A,B, \sigma}$ follows from the fact that
$$
\mathrm{spec}\ler{A^{-\fel}\ler{(1-\alpha)X+\alpha Y}A^{-\fel}} \subseteq \mathbf{ch}\ler{\mathrm{spec}\ler{A^{-\fel} X A^{-\fel}} \cup \mathrm{spec}\ler{A^{-\fel} Y A^{-\fel}}}
$$
holds for any $X, Y \in \bh^{++}$ and for any $\alpha \in [0,1],$ where $\mathbf{ch}(S)$ denotes the convex hull of the set $S,$ and the same holds when replacing $A$ by $B.$
So if both $X$ and $Y$ are in $\mathrm{dom}_{A,B, \sigma},$ that is, 
$$
\mathrm{spec}\ler{A^{-\fel} X A^{-\fel}} \cup \mathrm{spec}\ler{B^{-\fel} X B^{-\fel}} \cup \mathrm{spec}\ler{A^{-\fel} Y A^{-\fel}} \cup
\mathrm{spec}\ler{B^{-\fel} Y B^{-\fel}} \subset \mathrm{ran}\ler{f_\sigma},
$$
then any convex combination of $X$ and $Y$ is in $\mathrm{dom}_{A,B, \sigma}.$
To see that $\mathrm{dom}_{A,B, \sigma}$ is open, note that $\mathrm{ran}\ler{f_\sigma}$ is an open interval and $\mathrm{spec}\ler{A^{-\fel} X A^{-\fel}}$ is a finite set. Therefore if 
$\mathrm{spec}\ler{A^{-\fel} X A^{-\fel}} \subset \mathrm{ran}\ler{f_\sigma}$ then there is a small neighbourhood of $X$ such that $\mathrm{spec}\ler{A^{-\fel} Y A^{-\fel}} \subset \mathrm{ran}\ler{f_\sigma}$ for any element $Y$ in this neighbourhood (and the same holds when replacing $A$ by $B$). 
\par
We turn to the main result of this section which says that the Kubo-Ando mean $A\sigma B$ is exactly the divergence center (or barycenter) of $A$ and $B$ with respect to the divergence $\phi_\sigma.$
\begin{theorem} \label{thm:bary}
For any $A,B\in\cB(\cH)^{++},$
\be \label{eq:bary}
\argmin_{X\in\cB(\cH)^{++}}\frac{1}{2}\left(\phi_\sigma (A,X)+\phi_\sigma(B,X)\right)=A\sigma B.
\ee
That is, $A\sigma B$ is a unique minimizer of the function $X\mapsto \frac{1}{2}\left(\phi_\sigma (A,X)+\phi_\sigma(B,X)\right)$ on $\cB(\cH)^{++}$.
\end{theorem}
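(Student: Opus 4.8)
The plan is to separate the statement into a uniqueness part, handled by convexity, and an identification part, handled by a first-order computation; the convexity will let me conclude that it suffices to check that $A\sigma B$ is a critical point of the loss. First I would record the uniqueness half abstractly. By the strict convexity of $g_\sigma$ and the theorem of Carlen cited above, each map $X \mapsto \phi_\sigma(A,X)$ is strictly convex on its (finiteness) domain, so the loss $X \mapsto \fel\ler{\phi_\sigma(A,X)+\phi_\sigma(B,X)}$ is strictly convex on the open convex set $\mathrm{dom}_{A,B,\sigma}$. For a strictly convex function on an open convex set, any interior critical point is automatically the unique global minimizer. Since $A\sigma B$ has already been shown to lie in $\mathrm{dom}_{A,B,\sigma}$, it will be enough to prove that $A\sigma B$ is a critical point of the loss.

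Next I would compute the Fr\'echet derivative of the loss in $X$. Writing $M(X)=A^{-\fel} X A^{-\fel}$, which is linear in $X$ with $\D M[X](Y)=A^{-\fel} Y A^{-\fel}$, the chain rule together with the trace identity $\D\ler{\tr g_\sigma \circ M}[X](Y)=\tr\lerm{g_\sigma'(M(X))\,\D M[X](Y)}$ and cyclicity of the trace gives
$$
\D\ler{\phi_\sigma(A,\cdot)}[X](Y)=\tr\lerm{A^{-\fel} g_\sigma'\ler{A^{-\fel} X A^{-\fel}} A^{-\fel}\, Y},
$$
and likewise with $B$ in place of $A$. As $Y\in\bh^{sa}$ is arbitrary and the coefficient operators are self-adjoint, setting the derivative of the loss to zero is equivalent to the critical-point equation
$$
A^{-\fel} g_\sigma'\ler{A^{-\fel} X A^{-\fel}} A^{-\fel} + B^{-\fel} g_\sigma'\ler{B^{-\fel} X B^{-\fel}} B^{-\fel} = 0.
$$

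Finally I would verify that $X=A\sigma B$ solves this equation, which is where the symmetry of $\sigma$ is the decisive ingredient. From $A\sigma B=A^{\fel} f_\sigma\ler{A^{-\fel} B A^{-\fel}} A^{\fel}$ one gets $A^{-\fel}(A\sigma B)A^{-\fel}=f_\sigma\ler{A^{-\fel} B A^{-\fel}}$, so $f_\sigma^{-1}$ returns $A^{-\fel} B A^{-\fel}$ and, since $g_\sigma'(t)=1-1/f_\sigma^{-1}(t)$, the first summand collapses to $A^{-\fel}\ler{I-A^{\fel} B^{-1} A^{\fel}}A^{-\fel}=A^{-1}-B^{-1}$. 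By symmetry $A\sigma B=B\sigma A$, so the identical computation with the roles of $A$ and $B$ exchanged shows the second summand equals $B^{-1}-A^{-1}$; the two cancel, confirming that $A\sigma B$ is the critical point and hence, by the convexity step, the unique minimizer.

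The main obstacle, or rather the point demanding care, is precisely this final cancellation: it hinges on $A\sigma B=B\sigma A$, i.e. on the hypothesis that $\sigma$ is symmetric. An asymmetric mean would instead produce an unbalanced critical-point equation, which is exactly the phenomenon the weighted versions later in the paper are built to capture. A secondary, purely technical matter is to justify the interchange of the Fr\'echet derivative with the trace and the operator chain rule for $g_\sigma'$ evaluated by functional calculus; for these the standard derivative rules (collected in the reference used in the previous proof) suffice, and I would simply cite them rather than redo the computation.
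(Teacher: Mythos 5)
Your proposal is correct and follows essentially the same route as the paper's own proof: reduce to exhibiting a critical point via strict convexity, compute the Fr\'echet derivative of $X \mapsto \phi_\sigma(A,X)$ via the trace chain rule, substitute $X = A\sigma B$ so that $f_\sigma^{-1}$ collapses the first summand to $A^{-1}-B^{-1}$, and invoke the symmetry $A\sigma B = B \sigma A$ to get the cancelling term $B^{-1}-A^{-1}$. Your explicit attention to the finiteness domain $\mathrm{dom}_{A,B,\sigma}$ and to stating the critical-point condition as an operator equation is a slightly more careful packaging of the same argument, not a different one.
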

\begin{proof}
By the strict convexity of $X\mapsto \frac{1}{2}\left(\phi_\sigma (A,X)+\phi_\sigma(B,X)\right)$ it is sufficient to show that $A \sigma B$ is a critical point, and therefore a unique minimizer. First we compute the derivative 
\bec
\left.\frac{\dd}{\dd t}\right|_{t=0}\phi_\sigma(A,X+tY)=\left.\frac{\dd}{\dd t}\right|_{t=0}\tr g_\sigma\left(A^{-1/2}XA^{-1/2}+tA^{-1/2}YA^{-1/2}\right)=
\eec
\be\label{eq:derivative}
\tr A^{-1/2}g_\sigma'\left(A^{-1/2}XA^{-1/2}\right)A^{-1/2}Y
\ee
for all $Y\in\cB(\cH)^{sa}$. Since $g_\sigma'(x)=1-(f_\sigma^{-1}(x))^{-1}$, we get
\be\label{eq:derivative2}
\left.\frac{\dd}{\dd t}\right|_{t=0}\frac{1}{2}\phi_\sigma(A,X+tY)=\frac{1}{2}\tr \left(A^{-1/2}\left(I-\left[f_\sigma^{-1}\left(A^{-1/2}XA^{-1/2}\right)\right]^{-1}\right)A^{-1/2}Y\right)
\ee
for all $Y\in\cB(\cH)^{sa}$. Substituting $X=A\sigma B=A^{1/2}f_\sigma(A^{-1/2}BA^{-1/2})A^{1/2}$ into the derivative above, the right hand side of (\ref{eq:derivative2}) becomes
\bec
\frac{1}{2}\tr \left(A^{-1/2}\left(I-\left[f_\sigma^{-1}\left(A^{-1/2}A^{1/2}f_\sigma(A^{-1/2}BA^{-1/2})A^{1/2}A^{-1/2}\right)\right]^{-1}\right)A^{-1/2}Y\right)=
\eec
\be\label{eq:der3}
\frac{1}{2}\tr(A^{-1}-B^{-1})Y.
\ee
Since the operator mean $\sigma$ is symmetric, that is 
$$
A\sigma B=B\sigma A=B^{1/2}f_\sigma\left(B^{-1/2}AB^{-1/2}\right)B^{1/2},
$$
a similar computation for the derivative 
$$
\left.\frac{\dd}{\dd t}\right|_{t=0}\frac{1}{2}\phi_\sigma(B,X+tY)$$ 
at $X=A\sigma B$ gives
\be\label{eq:der4}
\frac{1}{2}\tr(B^{-1}-A^{-1})Y
\ee
for all $Y\in\cB(\cH)^{sa}$. Using (\ref{eq:der3}) and (\ref{eq:der4}) we get for the derivative
\bec
\left.\left.\frac{\dd}{\dd t}\right|_{t=0}\left\lbrace\frac{1}{2}\phi_\sigma(A,X+tY)+\frac{1}{2}\phi_\sigma(B,X+tY)\right\rbrace\right|_{X=A\sigma B}
\eec
\bec
=\frac{1}{2}\tr(A^{-1}-B^{-1})Y+\frac{1}{2}\tr(B^{-1}-A^{-1})Y=0
\eec
for all $Y\in\cB(\cH)^{sa}$. So we obtained that $A \sigma B$ is a critical point and hence a unique minimizer of $X\mapsto \frac{1}{2}\left(\phi_\sigma (A,X)+\phi_\sigma(B,X)\right).$
\end{proof}

\section{Weighted multivariate versions of Kubo-Ando means}

The above characterization of symmetric Kubo-Ando means as barycenters (Theorem \ref{thm:bary}) naturally leads to the idea of defining weighted and multivariate versions of Kubo-Ando means as minimizers of appropriate loss functions derived from the divergence $\phi_\sigma.$
\par
Given a symmetric Kubo-Ando mean $\sigma,$ a finite set of positive definite operators $\bA=\lers{A_1, \dots, A_m} \subset \bh^{++},$ and a discrete probability distribution $\bw=\lers{w_1, \dots, w_m} \subset (0,1]$ with $\sum_{j=1}^m w_j=1$ we define the corresponding loss function $Q_{\sigma, \bA, \bw}: \, \bh^{++} \rightarrow [0, \infty]$ by
\be \label{eq:Q-def}
Q_{\sigma, \bA, \bw}(X):=\sum_{j=1}^m w_j \phi_\sigma \ler{A_j,X}
\ee
where $\phi_\sigma$ is defined by \eqref{eq:fi-sig}.
\par
However, in the weighted multivariate setting, when $\mathrm{ran}\ler{f_\sigma}$ is smaller than the whole positive half-line $(0, \infty),$ then some undesirable phenomena occur which are illustrated by the next example.
\par
Consider the arithmetic mean generated by $f_\nabla(x)=(1+x)/2$ with $\mathrm{ran}\ler{f_\nabla}=\ler{\fel, \infty}.$ Let $A_1, A_2 \in \bh^{++}$ satisfy $ A_1 < \frac{1}{3} A_2.$ In this case, for any $\alpha \in (0,1),$ the loss function $Q_{\nabla, \lers{A_1, A_2}, \lers{1-\alpha, \alpha}} (X)$ is finite only if $X>\fel A_2.$ So the barycenter of $A_1$ and $A_2$ with weights $\lers{1-\alpha,\alpha}$ is separated from $A_1$ for every $\alpha \in (0,1),$ even for values very close to $0.$
\par
To exclude such phenomena, from now on, we assume that the range of $f_\sigma$ is maximal, that is, $\mathrm{ran}\ler{f_\sigma}=(0, \infty),$ and hence
$g_\sigma(x)=\int_1^x\left(1-\frac{1}{f_\sigma^{-1}(t)}\right)\dd t$ is defined on the whole positive half-line $(0, \infty).$ Consequently, $\phi_\sigma$ is always finite, and hence so is $Q_{\sigma, \bA, \bw}$ on the whole positive definite cone $\bh^{++}.$

\begin{definition}\label{def:bary_alpha}
Let $\sigma: \, \bh^{++} \times \bh^{++} \rightarrow \bh^{++}$ be a symmetric Kubo-Ando operator mean such that $f_\sigma: \, (0, \infty) \rightarrow (0,\infty),$ which is the operator monotone function representing $\sigma$ in the sense of \eqref{eq:f-sig-rep}, is surjective. Let $g_\sigma$ be defined as in \eqref{eq:g}, and $\phi_\sigma$ be defined as in \eqref{eq:fi-sig}.
We call the optimizer
\be \label{eq:bary_alpha}
\mathbf{bc}\ler{\sigma,\bA, \bw}:=\argmin_{X\in\cB(\cH)^{++}}Q_{\sigma, \bA, \bw}
\ee
the weighted barycenter of the operators $\lers{A_1, \dots, A_m}$  with weights $\lers{w_1, \dots, w_m}$.
\end{definition}
By Theorem \ref{thm:bary}, this barycenter may be considered as a weighted multivariate version of Kubo-Ando means.
\par
To find the barycenter $\mathbf{bc}\ler{\sigma, \bA, \bw},$ we have to solve the critical point equation
\be \label{eq:crit-point}
\D Q_{\sigma, \bA, \bw}[X](\cdot)=0
\ee
for the strictly convex loss function $Q_{\sigma, \bA, \bw},$ where the symbol
$$
\D Q_{\sigma, \bA, \bw}[X](\cdot) \in \mathrm{Lin}\ler{\bh^{sa}, \R}
$$
stands for the Fr\'echet derivative of $Q_{\sigma, \bA, \bw}$ at the point $X \in \bh^{++}.$
\par
For any $Y \in \bh^{sa}$ we have
$$
\D Q_{\sigma, \bA, \bw}[X](Y)
= \sum_{j=1}^m w_j \D \ler{\phi_\sigma \ler{A_j,\cdot}}[X](Y)
= \sum_{j=1}^m w_j \D \ler{\tr g_\sigma\ler{A_j^{-\fel} \cdot A_j^{-\fel}}}[X](Y)
$$
$$
= \sum_{j=1}^m w_j \tr g_\sigma'\ler{A_j^{-\fel} X A_j^{-\fel}}A_j^{-\fel} Y A_j^{-\fel}
$$
$$
= \tr \ler{\sum_{j=1}^m w_j A_j^{-\fel} g_\sigma'\ler{A_j^{-\fel} X A_j^{-\fel}}A_j^{-\fel}} Y 
$$
that is, the equation to be solved is
\be \label{eq:crit-point-2}
\sum_{j=1}^m w_j A_j^{-\fel} g_\sigma'\ler{A_j^{-\fel} X A_j^{-\fel}}A_j^{-\fel}=0 .
\ee
By the definition of $g_\sigma,$ see \eqref{eq:g}, $g_\sigma'(t)=1-\frac{1}{f_\sigma^{-1}(t)}$ for $t \in (0, \infty),$ and hence the critical point of the loss function $Q_{\sigma, \bA, \bw}$ is described by the equation
\be \label{eq:crit-point-3}
\sum_{j=1}^m w_j A_j^{-\fel} \ler{I-\ler{f_\sigma^{-1}\ler{A_j^{-\fel} X A_j^{-\fel}}}^{-1}}A_j^{-\fel}=0.
\ee

\subsection{The barycenter corresponding to the geometric mean}
For $\sigma=\#$ the generating function is $f_{\#}(x)=\sqrt{x},$ and hence the inverse is $f_{\#}^{-1}(t)=t^2.$ In this case, the critical point equation \eqref{eq:crit-point-3} describing the barycenter $\mathbf{bc}\ler{\#, \bA, \bw}$ reads as follows:
\be \label{eq:geom-bary}
\sum_{j=1}^m w_j \ler{A_j^{-1}-X^{-1} A_j X^{-1}}=0.
\ee
Note that \eqref{eq:geom-bary} may be considered as a \emph{generalized Riccati equation}, and in the special case $m=2, w_1=w_2=\fel,$ the solution of \eqref{eq:geom-bary} is the symmetric geometric mean $A_1 \# A_2.$
\par
More generally, if $m=2, w_1=1-\alpha,$ and $w_2=\alpha,$ then \eqref{eq:geom-bary} has the following form:
$$
(1-\alpha)A_1^{-1}+\alpha A_2^{-1}=(1-\alpha)X^{-1} A_1 X^{-1}+\alpha X^{-1} A_2 X^{-1},
$$
or equivalently
\be\label{eq:Riccati_uj-0}
X\left[(1-\alpha)A_1^{-1}+\alpha A_2^{-1}\right]X=(1-\alpha)A_1+\alpha A_2.
\ee
Recall that for positive definite $A$ and $B$, the Riccati equation 
$$
XA^{-1}X=B
$$
has a unique positive definite solution, that is the geometric mean 
$$
A\#B=A^{1/2}(A^{-1/2}BA^{-1/2})^{1/2}A^{1/2}.
$$
We can observe that \eqref{eq:Riccati_uj-0} is the Riccati equation for the weighted harmonic mean
$$
A_1 !_{\alpha} A_2=[(1-\alpha)A_1^{-1}+\alpha A_2^{-1}]^{-1}
$$
and the weighted arithmetic mean $A_1\nabla_{\alpha}A_2=(1-\alpha)A_1+\alpha A_2$, ie
\be\label{eq:Ric-0}
X(A_1 !_{\alpha} A_2)^{-1}X=A_1\nabla_{\alpha}A_2.
\ee
Hence the solution of \eqref{eq:Riccati_uj-0} is the geometric mean of the weighted harmonic and the weighted arithmetic mean
\be\label{eq:wgeom_uj-0}
X=(A_1 !_{\alpha} A_2)\#(A_1\nabla_{\alpha}A_2).
\ee
It means that in this case the weighted barycenter with respect to $\phi_{\#}$ does not coincide with the weighted geometric mean, nevertheless
$$
\mathbf{bc}\ler{\#, \lers{A_1, A_2}, \lers{1-\alpha, \alpha}}=(A_1 !_{\alpha} A_2)\#(A_1 \nabla_{\alpha} A_2)
$$
that is, $\mathbf{bc}\ler{\#, \lers{A_1, A_2}, \lers{1-\alpha, \alpha}}$ is the Kubo-Ando mean of $A_1$ and $A_2$ with representing function 
$$
f_{\ler{!_\alpha \# \nabla_\alpha}}(x)=
\sqrt{\frac{x(1-\alpha +\alpha x)}{(1-\alpha)x+\alpha}}.
$$
These means were widely investigated in \cite{kim-lawson-lim}.
\par
We note that the critical point equation \eqref{eq:geom-bary} can be rearranged as
\be \label{eq:geom-bary-v2}
X \ler{\sum_{j=1}^m w_j A_j^{-1}} X= \sum_{j=1}^m w_j A_j.
\ee
This is the Ricatti equation for the weighted multivariate harmonic mean $\ler{\sum_{j=1}^m w_j A_j^{-1}}^{-1}$ and arithmetic mean $\sum_{j=1}^m w_j A_j,$ hence the barycenter $\mathbf{bc}\ler{\#, \bA, \bw}$ coincides with the weighted $\cA \# \cH$-mean of Kim, Lawson, and Lim \cite{kim-lawson-lim}, that is,
\be \label{eq:bary-ah}
\mathbf{bc}\ler{\#, \bA, \bw}=\ler{\sum_{j=1}^m w_j A_j^{-1}}^{-1} \# \ler{\sum_{j=1}^m w_j A_j}.
\ee
%=============================================================================
\subsection{The weighted multivariate harmonic mean as a lower bound for the barycenter}
This subsection is devoted to show that the barycenter $\mathbf{bc}\ler{\sigma, \bA, \bw}$ is above the harmonic mean $\ler{\sum_{j=1}^m w_j A_j^{-1}}^{-1}$ in the L\"owner order.
\par
Let $f_\sigma$ be the representing function of the symmetric operator mean $\sigma$. It is well known that
\be\label{eq:f_ordering}
\frac{2x}{x+1}\le f_\sigma(x)\le \frac{1+x}{2},\qquad x \in (0, \infty).
\ee
The inverse of the function $\frac{2x}{x+1}$ is $\frac{x}{2-x}$ with the domain $(0,2)$. Since the graph of the inverse function can be obtained by 
reflecting the graph of the function across the line $y=x$, the reflection change the ordering, and we get
$$
f_{\sigma}^{-1}(x)\le\frac{x}{2-x},\quad x\in \ler{0,2},
$$
hence
\be\label{eq:f_inv_ordering}
\frac{2-x}{x}\le\frac{1}{f_{\sigma}^{-1}(x)},\quad x \in (0, \infty).
\ee
The critical point equation \eqref{eq:crit-point-3} for $Q_{\sigma, \bA, \bw}$ tells us that
\be \label{eq:crit-point-4}
\sum_{j=1}^m w_j A_j^{-1}=\sum_{j=1}^m w_j A_j^{-\fel}\ler{f_\sigma^{-1}\ler{A_j^{-\fel} \mathbf{bc}\ler{\sigma, \bA, \bw} A_j^{-\fel}}}^{-1}A_j^{-\fel}.
\ee
By \eqref{eq:f_inv_ordering}, the right hand side of \eqref{eq:crit-point-4} can be estimated from below by
$$
\sum_{j=1}^m w_j A_j^{-\fel} \ler{ 2 A_j^{\fel} \ler{\mathbf{bc}\ler{\sigma, \bA, \bw}}^{-1} A_j^{\fel}-I}A_j^{-\fel},
$$
hence
$$
\sum_{j=1}^m w_j A_j^{-1} \geq \sum_{j=1}^m w_j \ler{2 \ler{\mathbf{bc}\ler{\sigma, \bA, \bw}}^{-1} - A_j^{-1}}
$$
which is equivalent to the desired inequality
$$
\ler{\sum_{j=1}^m w_j A_j^{-1}}^{-1} \leq \mathbf{bc}\ler{\sigma, \bA, \bw}.
$$
%=============================================================================
\section{Further properties of the divergence $\phi_\sigma$ and the corresponding barycenter} \label{sec:further}

The following properties of $\phi_\sigma$ will be used in the sequel to show that $\phi_\sigma$ is symmetric if and only if $\sigma$ is the geometric mean (Proposition \ref{prop:symm}), and to show the congruence invariance of the barycenter $\mathbf{bc}\ler{\sigma, \bA, \bw}$ (Proposition \ref{prop:elem-prop}). These properties have been discussed in \cite[Sec. 2]{m-sz-laa-15} in detail, however, for the sake of completeness, we include the proofs, as well.

\begin{proposition} \label{prop:inv-cong}
For any Kubo-Ando mean $\sigma$ and for any $A,B\in\cB(\cH)^{++}$ we have
\be \label{eq:inv}
\phi_\sigma (A^{-1},B^{-1})=\phi_\sigma(B,A),
\ee
and the equality
\be \label{eq:congruence}
\phi_\sigma \left(TAT^*,TBT^*\right)=\phi_\sigma(A,B)
\ee
also holds for an arbitrary invertible operator $T\in\cB(\cH).$
\end{proposition}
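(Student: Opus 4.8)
The plan is to reduce both identities to a single observation: the quantity $\phi_\sigma(A,B)=\tr g_\sigma\ler{A^{-\fel}BA^{-\fel}}$ depends on the pair $(A,B)$ only through the spectrum (eigenvalues with multiplicity) of the relative operator $A^{-1}B$. Indeed, $A^{-\fel}BA^{-\fel}$ is self-adjoint and positive definite, and conjugation by $A^{\fel}$ gives $A^{\fel}\ler{A^{-1}B}A^{-\fel}=A^{-\fel}BA^{-\fel}$, so $A^{-\fel}BA^{-\fel}$ and $A^{-1}B$ are similar and share the same spectrum $\lers{\l_1,\dots,\l_n}$. Hence $\phi_\sigma(A,B)=\sum_i g_\sigma(\l_i)$ is a symmetric function of $\mathrm{spec}\ler{A^{-1}B}$, and it suffices to track how this spectrum transforms under inversion and under congruence.

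For the congruence identity \eqref{eq:congruence}, I would compute the relative operator of the transformed pair directly:
$$
\ler{TAT^*}^{-1}\ler{TBT^*}=\ler{T^*}^{-1}A^{-1}B\,T^*,
$$
which is manifestly similar, via $T^*$, to $A^{-1}B$. Since similar operators have the same spectrum, $\mathrm{spec}\ler{\ler{TAT^*}^{-1}\ler{TBT^*}}=\mathrm{spec}\ler{A^{-1}B}$, and the reduction above yields $\phi_\sigma\ler{TAT^*,TBT^*}=\phi_\sigma(A,B)$ at once.

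For the inversion identity \eqref{eq:inv}, the relative operator of $\ler{A^{-1},B^{-1}}$ is $\ler{A^{-1}}^{-1}B^{-1}=AB^{-1}$, while that of $(B,A)$ is $B^{-1}A$. These are of the form $XY$ and $YX$ with $X=A$ and $Y=B^{-1}$, so by the standard fact that $XY$ and $YX$ have the same spectrum (here both $A$ and $B^{-1}$ are invertible) we get $\mathrm{spec}\ler{AB^{-1}}=\mathrm{spec}\ler{B^{-1}A}$. The reduction then gives $\phi_\sigma\ler{A^{-1},B^{-1}}=\phi_\sigma(B,A)$.

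The only point requiring care is the functional-calculus and domain bookkeeping. First, I should phrase the ``depends only on the spectrum'' step rigorously: for any diagonalizable operator $M$ with eigenvalues $\l_i$ one has $\tr g_\sigma(M)=\sum_i g_\sigma(\l_i)$, and every relative operator appearing above is diagonalizable with positive spectrum, so the trace of $g_\sigma$ agrees across the similar operators even though the intermediate operators $A^{-1}B$, $AB^{-1}$, $B^{-1}A$ are not self-adjoint. Second, $g_\sigma$ is evaluated on these spectra: under the standing assumption $\mathrm{ran}\ler{f_\sigma}=(0,\infty)$ the function $g_\sigma$ is defined on all of $(0,\infty)$ and $\phi_\sigma$ is finite, so no issue arises; in the general case one notes additionally that membership of the spectrum in $\mathrm{ran}\ler{f_\sigma}$ is itself preserved by the two spectrum-preserving transformations, so the convention $\phi_\sigma=+\infty$ is respected on both sides. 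This bookkeeping is the main (and rather mild) obstacle; the algebraic core of the argument is entirely elementary.
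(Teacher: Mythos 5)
Your proposal is correct, and while your proof of the inversion identity \eqref{eq:inv} is in the same spirit as the paper's, your proof of the congruence identity \eqref{eq:congruence} takes a genuinely different route. You reduce everything to one lemma: $\phi_\sigma(A,B)$ depends only on the spectrum (with algebraic multiplicities) of the relative operator $A^{-1}B$, because $A^{-1/2}BA^{-1/2}=A^{1/2}\ler{A^{-1}B}A^{-1/2}$ is similar to $A^{-1}B$. Congruence invariance is then immediate from the similarity $\ler{TAT^*}^{-1}\ler{TBT^*}=\ler{T^*}^{-1}\ler{A^{-1}B}T^*$, and inversion follows from the $XY$-versus-$YX$ spectral identity applied to $AB^{-1}$ and $B^{-1}A$ (indeed a similarity here, since $A$ is invertible, so multiplicities are preserved). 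The paper proves \eqref{eq:inv} by the same kind of spectral observation, namely $\mathrm{spec}\ler{A^{-1/2}BA^{-1/2}}=\mathrm{spec}\ler{B^{1/2}A^{-1}B^{1/2}}$, but for \eqref{eq:congruence} it proceeds quite differently: using the polar decomposition of $X=A^{-1/2}BT^*\ler{TAT^*}^{-1/2}$ it produces an explicit unitary $V$ with $\ler{TAT^*}^{-1/2}\ler{TBT^*}\ler{TAT^*}^{-1/2}=V^*\ler{A^{-1/2}BA^{-1/2}}V$, after which $\tr g_\sigma$ of both sides agree trivially. The trade-off is this: your argument is shorter, unifies the two identities, and avoids polar decompositions, but it passes through non-self-adjoint operators, so it genuinely needs the bookkeeping you supply (the relative operators are diagonalizable with positive spectrum, similarity preserves algebraic multiplicities, and membership of the spectrum in $\mathrm{ran}\ler{f_\sigma}$ is preserved, so the $+\infty$ convention is respected on both sides); the paper's argument stays entirely within self-adjoint operators and unitary conjugation, so the functional-calculus step requires no care at all. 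Both are complete proofs.
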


\begin{proof}
Property \eqref{eq:inv} can easily be seen by the observation that $\mathrm{spec}\ler{A^{-\fel} B A^{-\fel}}=
\mathrm{spec}\ler{B^{\fel} A^{-1} B^{\fel}}.$
The proof of Property \eqref{eq:congruence} reads as follows:
\bec
\left[\left(TAT^*\right)^{-1/2}\left(TBT^*\right)\left(TAT^*\right)^{-1/2}\right]^2=\left(TAT^*\right)^{-1/2}TBA^{-1}BT^*\left(TAT^*\right)^{-1/2}=X^*X=\abs{X}^2,
\eec
where $X=A^{-1/2}BT^*\left(TAT^*\right)^{-1/2}$. Observe that we have
\be \label{eq:X}
\abs{X^*}^2=XX^*=A^{-1/2}BT^*\left(TAT^*\right)^{-1}TBA^{-1/2}=A^{-1/2}BA^{-1}BA^{-1/2}=\left(A^{-1/2}BA^{-1/2}\right)^2.
\ee
By using the polar decomposition $X=V\abs{X}$, $\abs{X^*}=V\abs{X}V^*$ and $\abs{X}=V^*\abs{X^*}V$ also hold. Hence we can write
\be
\ler{TAT^*}^{-1/2}\ler{TBT^*}\ler{TAT^*}^{-1/2}=\abs{X}=V^*\abs{X^*}V=V^*\ler{A^{-1/2}BA^{-1/2}}V,
\ee
where \eqref{eq:X} was used in the last step. Consequently we have
\bec
\phi_\sigma \ler{TAT^*,TBT^*}=\tr g_\sigma \left(V^*\ler{A^{-1/2}BA^{-1/2}}V\right)
\eec
\bec
=\tr V^*g_\sigma\ler{A^{-1/2}BA^{-1/2}}V=\tr g_\sigma\ler{A^{-1/2}BA^{-1/2}}=\phi_\sigma(A,B)
\eec
as we stated.
\end{proof}

\begin{proposition}\label{prop:symm}
The divergence $\phi_{\sigma}$ is symmetric in its arguments, that is 
$$\phi_{\sigma}(A,B)=\phi_{\sigma}(B,A)$$ holds for all $A,B\in\cB(\cH)^{++}$,  if and only if $\sigma=\#$ is the geometric mean.
\end{proposition}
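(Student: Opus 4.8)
The plan is to prove both implications, with the forward (``only if'') direction carrying essentially all the work. The backward direction is immediate: for $\sigma=\#$ one has $f_\#(x)=\sqrt x$, hence $f_\#^{-1}(t)=t^2$ and $g_\#(x)=\int_1^x\ler{1-t^{-2}}\dd t=x+\tfrac1x-2$, which manifestly satisfies $g_\#(x)=g_\#(1/x)$; equivalently $\phi_\#(A,B)=\tr\ler{A^{-1}B}+\tr\ler{AB^{-1}}-2\dim\cH$ is visibly symmetric in $A,B$.

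For the ``only if'' direction, first I would reduce the operator statement to a scalar functional equation. Writing $M=A^{-\fel}BA^{-\fel}$, the spectral relation $\mathrm{spec}\ler{B^{-\fel}AB^{-\fel}}=\mathrm{spec}\ler{M^{-1}}$ (already used in Proposition \ref{prop:inv-cong}) gives $\phi_\sigma(A,B)=\sum_i g_\sigma(\lambda_i)$ and $\phi_\sigma(B,A)=\sum_i g_\sigma(1/\lambda_i)$, where the $\lambda_i$ are the eigenvalues of $M$. Since $M$ ranges over all of $\cB(\cH)^{++}$ (take $A=I$), and in particular $B=xA$ yields $M=xI$, symmetry of $\phi_\sigma$ is equivalent to the scalar identity $g_\sigma(x)=g_\sigma(1/x)$ for all $x>0$. (Alternatively one combines $\phi_\sigma(A,B)=\phi_\sigma(B,A)$ with \eqref{eq:inv} to get the inversion invariance $\phi_\sigma\ler{A^{-1},B^{-1}}=\phi_\sigma(A,B)$, which gives the same reduction.) Next I would differentiate: since $g_\sigma(1)=0$, the identity $g_\sigma(x)=g_\sigma(1/x)$ is equivalent to $g_\sigma'(x)=-x^{-2}g_\sigma'(1/x)$, and substituting $g_\sigma'(x)=1-1/f_\sigma^{-1}(x)$ and clearing denominators yields
\[
\frac{x^2}{f_\sigma^{-1}(x)}+\frac{1}{f_\sigma^{-1}(1/x)}=x^2+1,\qquad x>0.
\]
The goal is to combine this with the symmetry relation $f_\sigma(x)=xf_\sigma(1/x)$ and force $f_\sigma(x)=\sqrt x$. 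Setting $s=f_\sigma^{-1}(x)$ and rewriting the reflected term via symmetry, the equation collapses to a single functional equation for $f_\sigma$ alone; in terms of $F(s):=f_\sigma(s)/\sqrt s$ (which satisfies $F(s)=F(1/s)$ and $F(1)=1$) it reads $F\big(1+(s-1)F(s)^2\big)^2=\dfrac{1+(s-1)F(s)^2}{sF(s)^2}$.

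The main obstacle is exactly the uniqueness claim $F\equiv 1$ for this equation. The point is that the symmetry $f_\sigma(x)=xf_\sigma(1/x)$ does \emph{not} by itself express $f_\sigma^{-1}(1/x)$ through $f_\sigma^{-1}(x)$: only the self-duality relation $f_\sigma(x)f_\sigma(1/x)=1$ does that, and a symmetric mean that is also self-dual must satisfy $f_\sigma(x)^2=x$, i.e.\ be the geometric mean. So the content of the displayed functional equation is precisely that it forces self-duality, and no purely pointwise algebraic manipulation will close the argument. To prove $F\equiv1$ I would use that the representing function of a Kubo-Ando mean is real-analytic on $(0,\infty)$: writing $\Phi(t):=F(e^t)=1+a_2t^2+a_4t^4+\cdots$ (an even power series, by $F(s)=F(1/s)$), I would substitute into the functional equation and match Taylor coefficients at $t=0$. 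One checks directly that $a_2=0$, and then argues inductively that if $a_2=\dots=a_{2k}=0$ then the leading correction produced by the factor $e^t$ on one side appears at order $t^{2k+2}$ while the other side only corrects at order $t^{2k+3}$, forcing $a_{2k+2}=0$. Hence $\Phi\equiv1$, so $F\equiv1$, $f_\sigma(x)=\sqrt x$, and $\sigma=\#$, which together with the easy converse completes the proof.
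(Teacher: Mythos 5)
Your proof is correct, but it follows a genuinely different route from the paper's. The paper never reduces to a scalar functional equation: it feeds the assumed symmetry of $\phi_\sigma$ directly into Theorem \ref{thm:bary} and the inversion identity \eqref{eq:inv}, obtaining
\begin{equation*}
A\sigma B=\argmin_{X\in\bh^{++}}\frac{1}{2}\ler{\phi_\sigma(X,A)+\phi_\sigma(X,B)}
=\argmin_{X\in\bh^{++}}\frac{1}{2}\ler{\phi_\sigma\ler{A^{-1},X^{-1}}+\phi_\sigma\ler{B^{-1},X^{-1}}}=\ler{A^{-1}\sigma B^{-1}}^{-1},
\end{equation*}
that is, $\sigma=\sigma^*$ is self-adjoint, so $f_\sigma(x)f_\sigma(1/x)=1$; combined with the symmetry relation $f_\sigma(x)=xf_\sigma(1/x)$ this yields $f_\sigma(x)^2=x$ at once. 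In other words, the paper extracts from the variational characterization precisely the self-duality relation whose absence you correctly identify as the main obstacle in the pointwise approach. Your route --- reducing to $g_\sigma(x)=g_\sigma(1/x)$ via the pairs $(I,xI)$, passing to $F(s)=f_\sigma(s)/\sqrt{s}$, and killing the Taylor coefficients of $\Phi(t)=F(e^t)$ inductively --- does work, and I verified your inductive mechanism: if $\Phi-1=at^{2m}+\bigO{t^{2m+2}}$, then the equation $sF(s)^2F(w)^2=w$ with $w=1+(s-1)F(s)^2$ has left-hand side $e^t\ler{1+4at^{2m}+\bigO{t^{2m+2}}}$ and right-hand side $e^t\ler{1+2at^{2m+1}+\bigO{t^{2m+2}}}$, forcing $a=0$; this matches your ``order $t^{2k+2}$ versus order $t^{2k+3}$'' claim. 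What your argument buys: it bypasses Theorem \ref{thm:bary} (and the convexity and uniqueness discussion behind it) entirely, and it uses the symmetry hypothesis only on commuting pairs $(I,xI)$, so it proves the formally stronger statement that symmetry of $\phi_\sigma$ on scalar pairs already forces $\sigma=\#$. What it costs: you must invoke real-analyticity of operator monotone functions (L\"owner theory), which the paper nowhere needs, and the coefficient-matching induction --- the actual core of your proof --- is only sketched in your write-up; for a complete proof it should be written out along the lines of the expansion above.
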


\begin{proof}
If $\phi_{\sigma}(A,B)=\phi_{\sigma}(B,A)$ holds for all $A,B\in\cB(\cH)^{++}$, then by the Theorem \ref{thm:bary} we have
\begin{eqnarray*}
A\sigma B&=&\argmin_{X\in\cB(\cH)^{++}}\frac{1}{2}\left(\phi_\sigma (A,X)+\phi_\sigma(B,X)\right)\\
&=&\argmin_{X\in\cB(\cH)^{++}}\frac{1}{2}\left(\phi_\sigma (X,A)+\phi_\sigma(X,B)\right)\\
&=&\argmin_{X\in\cB(\cH)^{++}}\frac{1}{2}\left(\phi_\sigma \ler{A^{-1},X^{-1}}+\phi_\sigma\ler{B^{-1},X^{-1}}\right)=\ler{A^{-1}\sigma B^{-1}}^{-1},
\end{eqnarray*}
where \eqref{eq:inv} was used in the second line. Recall that the adjoint $\sigma^*$ of the mean $\sigma$ is defined for invertible $A, B$ by $A\sigma^*B=\ler{A^{-1}\sigma B^{-1}}^{-1}$, and its representing function is given by 
$$
f_{\sigma^*}(x)=\frac{1}{f_{\sigma}\left(\frac{1}{x}\right)}.
$$
Hence it is necessary that the mean $\sigma$ is selfadjoint, that is $A\sigma B=A\sigma^*B$, which means that for the representation function we have 
$$
f_{\sigma}(x)=\frac{1}{f_{\sigma}\left(\frac{1}{x}\right)}.
$$ Since $\sigma$ is also symmetric, i.e. $f_{\sigma}(x)=xf_{\sigma}(1/x)$, these two conditions implies that $f_{\sigma}(x)=\sqrt{x}$, hence we have geometric mean. The sufficiency is clear as
$$
\phi_\#(A,B)= \tr\ler{A^{-\fel}BA^{-\fel}+A^{\fel}B^{-1}A^{\fel}-2I}.
$$
\end{proof}

\subsection{Elementary properties of the barycenter}
We turn to show some of the desirable properties of the barycenter $\mathbf{bc}\ler{\sigma, \bA, \bw}.$

\begin{proposition} \label{prop:elem-prop}
The barycenter $\mathbf{bc}\ler{\sigma, \bA, \bw}$ defined in \eqref{eq:bary_alpha} satisfies the following properties:
\ben[label=(\alph*)]
\item \label{prop:idemp}
Idempotency: $\mathbf{bc}\ler{\sigma, \lers{A,\dots,A}, \bw}=A$ for any symmetric Kubo-Ando mean $\sigma,$ any $A \in \bh^{++},$ and any probability vector $\bw.$
\item \label{prop:homog}
Homogeneity: $\mathbf{bc}\ler{\sigma, t \bA, \bw}=t \mathbf{bc}\ler{\sigma, \bA, \bw}$ where the shorthand $t \bA$ denotes $\lers{t A_1, \dots, t A_m}$ if $\bA=\lers{A_1, \dots \, A_m}$
\item \label{prop:perm}
Permutation invariance: $\mathbf{bc}\ler{\sigma, \bA_\pi, \bw_\pi}=\mathbf{bc}\ler{\sigma, \bA, \bw}$ where $\pi$ is a permutation of $\lers{1, \dots, m},$ and $\bA_\pi=\lers{A_{\pi(1)}, \dots, A_{\pi(m)}}, \, \bw_\pi=\lers{w_{\pi(1)}, \dots, w_{\pi(m)}}.$
\item \label{prop:cong}
Congruence invariance:
$$
\mathbf{bc}\ler{\sigma, T \bA T^{*}, \bw}=T \mathbf{bc}\ler{\sigma, \bA, \bw} T^{*}
$$
for any invertible $T \in \bh,$ where $T \bA T^{*}=\lers{T A_1, T^{*}, \dots, T A_m T^{*}}$ if $\bA=\lers{A_1, \dots \, A_m}.$
\een
\end{proposition}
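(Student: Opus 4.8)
The plan is to derive each of the four properties from a corresponding invariance of the loss function $Q_{\sigma, \bA, \bw}$, exploiting that this loss is strictly convex (as follows from the strict convexity of $X \mapsto \phi_\sigma(A, X)$ noted after Proposition \ref{prop:div-valid}) and hence admits a \emph{unique} minimizer. The guiding principle is simple: if a bijection $\Gamma$ of $\bh^{++}$ onto itself satisfies $Q_{\sigma, \bA', \bw'}\ler{\Gamma(X)} = Q_{\sigma, \bA, \bw}(X)$ for all $X$, then $\Gamma$ carries the minimizer of the right-hand loss to that of the left-hand loss, so that $\mathbf{bc}\ler{\sigma, \bA', \bw'} = \Gamma\ler{\mathbf{bc}\ler{\sigma, \bA, \bw}}$.

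I would first dispatch the two routine cases. For idempotency \ref{prop:idemp}, when every $A_j$ equals $A$ the weights sum to one and the loss collapses to $Q_{\sigma, \lers{A, \dots, A}, \bw}(X) = \phi_\sigma(A, X)$; by Proposition \ref{prop:div-valid}\ref{pr:1definit} this is nonnegative and vanishes exactly at $X = A$, so $A$ is the unique minimizer. For permutation invariance \ref{prop:perm}, since $Q_{\sigma, \bA, \bw}(X) = \sum_{j=1}^m w_j \phi_\sigma(A_j, X)$ is a finite sum, reindexing the summands by a permutation $\pi$ gives $Q_{\sigma, \bA_\pi, \bw_\pi} = Q_{\sigma, \bA, \bw}$ identically, whence the minimizers coincide.

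The substantive case is congruence invariance \ref{prop:cong}, which then yields homogeneity \ref{prop:homog} as a corollary. For an invertible $T \in \bh$, the map $\Gamma_T \colon X \mapsto TXT^*$ is a bijection of $\bh^{++}$ onto itself. Applying the congruence identity \eqref{eq:congruence} of Proposition \ref{prop:inv-cong} termwise, I would compute
$$
Q_{\sigma, T\bA T^*, \bw}(TXT^*) = \sum_{j=1}^m w_j \phi_\sigma\ler{T A_j T^*, T X T^*} = \sum_{j=1}^m w_j \phi_\sigma(A_j, X) = Q_{\sigma, \bA, \bw}(X).
$$
By the guiding principle above, $\Gamma_T$ sends $\mathbf{bc}\ler{\sigma, \bA, \bw}$ to $\mathbf{bc}\ler{\sigma, T\bA T^*, \bw}$, which is exactly \ref{prop:cong}. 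Specializing to $T = \sqrt{t}\, I$ gives $T A_j T^* = t A_j$ and $\Gamma_T(X) = tX$, recovering the homogeneity identity $\mathbf{bc}\ler{\sigma, t\bA, \bw} = t\, \mathbf{bc}\ler{\sigma, \bA, \bw}$ of \ref{prop:homog}.

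I do not anticipate a genuine obstacle here: the only nontrivial input, the congruence invariance of the divergence $\phi_\sigma$ itself, is already secured by Proposition \ref{prop:inv-cong}, and everything else reduces to the bijectivity of $\Gamma_T$ on the positive cone together with the uniqueness of the minimizer. The one point that deserves care is to confirm that $\Gamma_T$ genuinely maps $\bh^{++}$ onto $\bh^{++}$, which holds precisely because $T$ is invertible, so that transporting the optimization domain introduces neither spurious nor missing minimizers.
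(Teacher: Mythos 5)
Your proposal is correct and follows essentially the same route as the paper: the paper also dispatches \ref{prop:idemp}, \ref{prop:homog}, \ref{prop:perm} as straightforward and proves \ref{prop:cong} by combining the congruence invariance \eqref{eq:congruence} of $\phi_\sigma$ from Proposition \ref{prop:inv-cong} with the uniqueness of the minimizer of the strictly convex loss $Q_{\sigma, \bA, \bw}$. Your only addition is spelling out the routine cases and observing that \ref{prop:homog} follows from \ref{prop:cong} with $T=\sqrt{t}\,I$, which is a clean way to organize the "straightforward" parts.
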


\begin{proof}
Properties \ref{prop:idemp},\ref{prop:homog}, \ref{prop:perm} are straightforward. To justify \ref{prop:cong}, we use the second statement of Proposition \ref{prop:inv-cong}, that is, eq. \eqref{eq:congruence}. By this congruence invariant property of $\phi_\sigma$, it is clear that the unique minimizer of the loss function $Q_{\sigma, T \bA T^{*}, \bw}$ is $T X T^{*},$ where $X$ is the unique minimizer of $Q_{\sigma, \bA, \bw}.$
\end{proof}

%==================================================================

\subsection{In-betweenness}
We finish with an in-betweenness result for the divergences $\phi_\sigma$ and for arbitrary Kubo-Ando means.
The concept of in-betweenness was introduced by Audenaert \cite{aud-laa-13} and has been investigated later by Dinh, Dumitru, and Franco \cite{d-d-f-inb-19, du-fra-laa-20} among others.

\begin{proposition}
For any Kubo-Ando means $\sigma$ and $\tau,$ the in-betweenness property
$$
\phi_\sigma(A, A \tau B) \leq \phi_\sigma (A, B) \qquad \ler{A, B \in \mathcal{B}(\mathcal{H})^{++}}
$$
holds, where $\phi_\sigma$ is defined by \eqref{eq:fi-sig}.
\end{proposition}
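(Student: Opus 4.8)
The plan is to reduce the operator inequality to a scalar comparison on the spectrum. Writing $C := A^{-\fel} B A^{-\fel}$ and invoking the representation $A \tau B = A^{\fel} f_\tau\ler{A^{-\fel} B A^{-\fel}} A^{\fel}$ from \eqref{eq:f-sig-rep}, I would first observe that
$$
A^{-\fel} \ler{A \tau B} A^{-\fel} = f_\tau(C),
$$
so that $\phi_\sigma(A, A \tau B) = \tr g_\sigma\ler{f_\tau(C)} = \tr \ler{g_\sigma \circ f_\tau}(C)$, while $\phi_\sigma(A,B) = \tr g_\sigma(C)$. Both quantities are traces of functions of the single positive definite operator $C$, so diagonalizing $C$ and comparing eigenvalue by eigenvalue reduces the claim to the scalar inequality
$$
g_\sigma\ler{f_\tau(x)} \le g_\sigma(x) \qquad \ler{x \in \mathrm{spec}(C)},
$$
which I would aim to prove for every $x > 0$. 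The case $\phi_\sigma(A,B) = +\infty$ is trivial, and the finiteness/domain issue is handled by noting that $\mathrm{ran}\ler{f_\sigma}$ is an interval containing $1$, hence containing $f_\tau(x)$ whenever it contains $x$ (see the next paragraph).

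The key scalar ingredient is the betweenness of the mean $\tau$: for every $x > 0$ the value $f_\tau(x) = 1 \tau x$ lies between $1$ and $x$, that is, $\min(1,x) \le f_\tau(x) \le \max(1,x)$. I would derive this directly from the defining axioms of Kubo-Ando means: homogeneity together with normalization yields idempotency $a \tau a = a$, and then monotonicity gives
$$
\min(1,x) = \min(1,x) \tau \min(1,x) \le 1 \tau x \le \max(1,x) \tau \max(1,x) = \max(1,x).
$$

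The second ingredient is the shape of $g_\sigma$, already recorded in the text: $g_\sigma(1) = 0$, $g_\sigma'(1) = 0$, and $g_\sigma$ is strictly convex, whence $g_\sigma$ attains its global minimum at $1$, is strictly decreasing on $(0,1]$ and strictly increasing on $[1,\infty)$. Combining the two ingredients: if $x \ge 1$ then $1 \le f_\tau(x) \le x$, and monotonicity of $g_\sigma$ on $[1,\infty)$ gives $g_\sigma\ler{f_\tau(x)} \le g_\sigma(x)$; if $x \le 1$ then $x \le f_\tau(x) \le 1$, and monotonicity of $g_\sigma$ on $(0,1]$ (which reverses the inequality) gives the same conclusion. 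This establishes the pointwise inequality, and summing over the eigenvalues of $C$ finishes the proof.

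I expect the only genuine obstacle to be the clean derivation of the scalar betweenness $\min(1,x) \le f_\tau(x) \le \max(1,x)$; everything else is a spectral reduction together with the elementary observation that $g_\sigma$ is unimodal with minimum at $1$. Note that this argument uses neither symmetry nor surjectivity of $f_\sigma$, only $f_\sigma(1) = 1$ and the strict monotonicity of $f_\sigma$, so it applies to arbitrary Kubo-Ando means $\sigma$ and $\tau$ as stated.
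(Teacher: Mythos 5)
Your proof is correct and takes essentially the same route as the paper: both reduce to the scalar inequality $g_\sigma\ler{f_\tau(x)} \le g_\sigma(x)$ by combining the betweenness $\min(1,x)\le f_\tau(x)\le \max(1,x)$ with the unimodality of $g_\sigma$ (convex, vanishing together with its derivative at $1$), and then take traces of functions of the single operator $C=A^{-\fel}BA^{-\fel}$. The only variations are minor: you derive the betweenness from the Kubo-Ando axioms (idempotency via homogeneity and normalization, then monotonicity), where the paper invokes the integral representation of $f_\tau$ as an average of weighted harmonic means of $x$ and $1$, and you additionally settle the finiteness/domain point (that $\mathrm{ran}\ler{f_\sigma}$ is an interval containing $1$, so it contains $f_\tau(x)$ whenever it contains $x$), which the paper leaves implicit.
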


\begin{proof}

For any positive number $x,$ the value $f_\tau(x)$ --- where $f_\tau$ is the operator monotone generating function of the mean $\tau$ --- is between $x$ and $1,$ because $f_\tau(x)$ is an integral of weighted harmonic means of $x$ and $1.$ That is, for $x \leq 1$ we have $x\leq f_\tau(x)\leq 1,$ and for $1\leq x$ we have $1\leq f_\tau(x) \leq x.$ As noted before, $g_\sigma$ is positive, convex, and has a unique zero at $1$. Consequently, $g_\sigma\left(f_\tau(x)\right) \leq g_\sigma(x)$ for all $x \in (0, \infty),$ and hence
$$
g_\sigma\left(f_\tau\left(A^{-\frac{1}{2}}BA^{-\frac{1}{2}}\right)\right)
\leq
g_\sigma\left(A^{-\frac{1}{2}}BA^{-\frac{1}{2}}\right) \qquad \ler{A, B \in \mathcal{B}(\mathcal{H})^{++}},
$$
so
$$
\phi_\sigma (A, A \tau B)=\tr g_\sigma \left(f_\tau\left(A^{-\frac{1}{2}}BA^{-\frac{1}{2}}\right)\right)
\leq
\tr g_\sigma\left(A^{-\frac{1}{2}}BA^{-\frac{1}{2}}\right)=\phi_\sigma(A, B).
$$
\end{proof}

\paragraph*{{\bf Acknowledgement}} The authors are grateful to Mil\'an Mosonyi for fruitful discussions on the topic, and to the anonymous referee for his/her comments and suggestions.

%=================================================================================================
\section{Appendix}\label{sec: Appendix}

One can define a Riemannian metric on $\cB(\cH)^{++}$ locally at $A$ by the relation 
$$\ds=\|A^{-1}\dd A A^{-1}\|_2,$$
where $\|X\|=\sqrt{\tr X^*X}$ is the Hilbert-Schmidt or Frobenius norm of $X$. Let $X: [a,b]\to \cB(\cH)^{++} $ be a smooth path. The arc-length along this path is given by
\be\label{eq: L}
L(X)=\int_a^b\|X(t)^{-1}X'(t)X(t)^{-1}\|_2\dt
\ee
The corresponding geodesic distance between $A,B\in \cB(\cH)^{++}$ is defined by 
\be\label{eq:geod}
\delta(A,B)=\inf\left\{ \int_0^1\|X(t)^{-1}X'(t)X(t)^{-1}\|_2\dt :X(t)\in \cB(\cH)^{++} \text{ for } t\in (0,1), X(0)=A, X(1)=B\right\}.
\ee
Given any smooth path $X(t)$ joining $A$ to $B$ define $H(t)=X(t)^{-1}$. Then $$H'(t)=-X(t)^{-1}X'(t)X(t)^{-1}$$ and 
\begin{eqnarray*}
&& \norm{B^{-1}-A^{-1}}_2=\norm{H(1)-H(0)}_2=\norm{\int_0^1H'(t)\dd t}_2\\
 &=&\norm{-\int_0^1 X(t)^{-1}X'(t)X(t)^{-1}\dd t}_2
 \le \int_0^1\norm{X(t)^{-1}X'(t)X(t)^{-1}}_2\dd t=L(X).
\end{eqnarray*}
We show that the weighted harmonic mean
$$
\gamma (t)=A!_tB=\left[(1-t)A^{-1}+tB^{-1}\right]^{-1}, \quad t\in [0,1]
$$ 
is the unique constant speed geodesic running from $A$ to $B$ in unit time. 

Denote $C=A^{1/2}B^{-1}A^{1/2}$ and $X(t)=(1-t)I+tC$. Then we can write
$$\gamma(t)=[A^{-1/2}((1-t)I+tA^{1/2}B^{-1}A^{1/2})A^{-1/2}]^{-1}=A^{1/2}[(1-t)I+tC]^{-1}A^{1/2}=A^{1/2}X(t)^{-1}A^{1/2},$$
$$\gamma'(t)=-A^{1/2}[(1-t)I+tC]^{-1}(C-I)[(1-t)I+tC]^{-1}A^{1/2}=-A^{1/2}X(t)^{-1}(C-I)X(t)^{-1}A^{1/2},$$
$$\gamma(t)^{-1}=A^{-1/2}X(t)A^{-1/2}.$$
After substitution we get
$$\|\gamma(t)^{-1}\gamma'(t)\gamma(t)^{-1}\|_2^2=\tr\left\{ \gamma(t)^{-1}\gamma'(t)\gamma(t)^{-2}\gamma'(t)\gamma(t)^{-1}\right\}$$
$$=\tr\left\{ A^{-1/2}X(t)A^{-1/2} A^{1/2}X(t)^{-1}(C-I)X(t)^{-1}A^{1/2}A^{-1/2}X(t)A^{-1}X(t)A^{-1/2}\times\right. $$ 
$$\left.A^{1/2}X(t)^{-1}(C-I)X(t)^{-1}A^{1/2} A^{-1/2}X(t)A^{-1/2}\right\}$$
$$=\tr\left\{ A^{-1/2}(C-I)A^{-1}(C-I)A^{-1/2}\right\} = \tr (B^{-1}-A^{-1})(B^{-1}-A^{-1})=\|B^{-1}-A^{-1}\|_2^2.
$$
It shows that the weighted harmonic mean $\gamma (t)=A!_tB=[(1-t)A^{-1}+tB^{-1}]^{-1}$ is geodesic and its length is
$$L(\gamma)=\int_0^1\|\gamma(t)^{-1}\gamma'(t)\gamma(t)^{-1}\|_2\dd t=\int_0^1\|B^{-1}-A^{-1}\|_2\dd t=\|B^{-1}-A^{-1}\|_2,$$
which also means that the geodesic distance is $\delta (A,B)=\|B^{-1}-A^{-1}\|_2$.

\end{document}